\newcommand{\rev}[1]{{#1}}
\newcommand{\R}{\mathbb{R}}
\newcommand{\C}{\mathbb{C}}
\newcommand{\K}{\mathbb{K}}
\newcommand{\F}{\textup{F}}
\DeclareMathOperator*{\argmin}{argmin}
\renewcommand{\d}{\,\textup{d}}
\newcommand*{\dd}{{\,\mathrm{d}}}
\newcommand*{\spec}{{\mathrm{Sp}}}
\renewcommand{\K}{\mathcal{K}}
\newcommand{\Av}{\mathbf{A}}
\newcommand{\Bv}{\mathbf{B}}
\newcommand{\Cv}{\mathbf{C}}
\newcommand{\Fv}{\mathbf{F}}
\newcommand{\Gv}{\mathbf{G}}
\newcommand{\Iv}{\mathbf{I}}
\newcommand{\Kv}{\mathbf{K}}
\newcommand{\Mv}{\mathbf{M}}
\newcommand{\Uv}{\mathbf{U}}
\newcommand{\Vv}{\mathbf{V}}
\newcommand{\Wv}{\mathbf{W}}
\newcommand{\Xv}{\mathbf{X}}
\newcommand{\Yv}{\mathbf{Y}}
\newcommand{\Psiv}{\mathbf{\Psi}}
\newcommand{\fv}{\mathbf{f}}
\newcommand{\gv}{\mathbf{g}}
\newcommand{\hv}{\mathbf{h}}
\newcommand{\vv}{\mathbf{v}}
\newcommand{\xv}{\mathbf{x}}
\newcommand{\yv}{\mathbf{y}}
\newtheorem{theorem}{Theorem}[section]
\newtheorem{lemma}[theorem]{Lemma}
\newtheorem{assumption}[theorem]{Assumption}
\crefname{assumption}{Assumption}{Assumptions}
\newtheorem{remark}[theorem]{Remark}
\numberwithin{equation}{section}
\journal{ArXiv}
\begin{document}

\begin{frontmatter}

    \title{On the Convergence of Hermitian Dynamic Mode Decomposition}

    \author[Imperial]{Nicolas Boull\'e}

    \affiliation[Imperial]{
        organization={Department of Mathematics, Imperial College London},
        city={London},
        postcode={SW7 2AZ},
        country={UK}}

    \author[Cambridge]{Matthew J. Colbrook}

    \affiliation[Cambridge]{
        organization={Department of Applied Mathematics and Theoretical Physics, University of Cambridge},
        city={Cambridge},
        postcode={CB3 0WA},
        country={UK}}

    \begin{abstract}
        We study the convergence of Hermitian Dynamic Mode Decomposition (DMD) to the spectral properties of self-adjoint Koopman operators. Hermitian DMD is a data-driven method that approximates the Koopman operator associated with an unknown nonlinear dynamical system, using discrete-time snapshots. This approach preserves the self-adjointness of the operator in its finite-dimensional approximations. \rev{We prove that, under suitably broad conditions, the spectral measures corresponding to the eigenvalues and eigenfunctions computed by Hermitian DMD converge to those of the underlying Koopman operator}. This result also applies to skew-Hermitian systems (after multiplication by $i$), applicable to generators of continuous-time measure-preserving systems. Along the way, we establish a general theorem on the convergence of spectral measures for finite sections of self-adjoint operators, including those that are unbounded, which is of independent interest to the wider spectral community. We numerically demonstrate our results by applying them to two-dimensional Schr\"odinger equations.
    \end{abstract}

    \begin{keyword}
        dynamical systems \sep Koopman operators \sep dynamic mode decomposition \sep spectral convergence \sep self-adjoint operators


    \end{keyword}

\end{frontmatter}

\section{Introduction}

We consider discrete-time dynamical systems of the form:
\begin{equation} \label{eq:DynamicalSystem}
    \xv_{n+1} = \Fv(\xv_n), \quad n \in \mathbb{N},
\end{equation}
where $\xv\in\Omega$ denotes the state of the system, and $\Omega\subseteq\mathbb{R}^d$ is the state-space for $d\in\mathbb{N}$. The (typically) nonlinear function $\Fv:\Omega \rightarrow \Omega$ governs the system's evolution and is unknown. We assume that our knowledge of the system is limited to $M\geq 1$ discrete-time snapshots of the system, i.e., one has only access to a finite dataset of the form
\begin{equation} \label{eq_data}
    \left\{\xv^{(m)},\yv^{(m)}\right\}_{m=1}^M,\quad \text{such that}\quad \yv^{(m)}=\Fv(\xv^{(m)}),\quad 1\leq m\leq M.
\end{equation}
This snapshot data can be collected from either one long trajectory or multiple shorter trajectories and acquired via experimental observations or numerical simulations. In general, one aims to use this data to infer and reconstruct properties of the underlying dynamical system given by \eqref{eq:DynamicalSystem}. With the arrival of big data and machine learning, this data-driven viewpoint is currently undergoing a renaissance. Examples of applications of this framework arise naturally across many scientific fields, including fluid dynamics~\cite{schmid2010dynamic}, epidemiology~\cite{rowley2009spectral}, chemistry~\cite{narasingam2019koopman}, and neuroscience~\cite{brunton2016extracting}, to name a few.

One of the most prominent algorithms for data-driven analysis of dynamical systems is \textit{Dynamic Mode Decomposition} (DMD), which is closely connected with \textit{Koopman operators}. In 1931, Koopman introduced an operator-theoretic approach to dynamical systems, initially to describe Hamiltonian systems \cite{koopman1931hamiltonian}. This theory was further expanded by Koopman and von Neumann~\cite{koopman1932dynamical} to include systems with continuous spectra. A Koopman operator $\mathcal{K}$ lifts a nonlinear system \eqref{eq:DynamicalSystem} into an \textit{infinite-dimensional} space of observable functions $g:\Omega\rightarrow\mathbb{C}$ as
\[
    [\mathcal{K}g](\xv) = g(\Fv(\xv)), \quad \text{such that}\quad [\mathcal{K}g](\xv_n)=g(\xv_{n+1}) \text{ for } n\geq 0.
\]
Through this approach, the evolution dynamics become linear, enabling the use of generic solution techniques based on spectral decompositions. DMD was initially developed in the context of fluid dynamics~\cite{schmid2008dynamic,schmid2010dynamic}. Earlier, Mezi{\'c} introduced the Koopman mode decomposition~\cite{mezic2005spectral}, providing a theoretical basis for Rowley et al. to connect DMD with Koopman operators~\cite{rowley2009spectral}. However, the standard DMD algorithm is based on linear observables and generally fails to capture truly nonlinear phenomena. To address this limitation, \textit{Extended DMD} (EDMD)~\cite{williams2015data} extends the DMD algorithm to nonlinear observables. Then, under suitable conditions, in the large-data limit $M\rightarrow\infty$, EDMD converges to the numerical approximation obtained by a Galerkin method in the limit of large data sets. At its core, EDMD is a projection method that aims to compute the spectral properties of Koopman operators. \rev{It is important to realize that EDMD only converges in the strong operator topology to $\mathcal{K}$ \cite{korda2018convergence}\footnote{Essentially, this means that the action of EDMD on observables converges to the action of the Koopman operator on observables. This form of convergence implies that limit points of eigenpairs of EDMD are also eigenpairs of the Koopman operator if the limiting vector is non-zero \cite[Theorem 4]{korda2018convergence}. As the authors of \cite{korda2018convergence} highlight, this is an extremely weak form of convergence.} and its spectral properties need not converge \cite[Example 2]{mezic2020numerical}. For ways of overcoming this and classifications of how difficult Koopman spectral computations are, see \cite{colbrook2024limits}.}

In a recent paper \cite{baddoo2023physics}, Baddoo et al. provided a unified framework, \textit{Physics-Informed Dynamic Mode Decomposition} (piDMD), for imposing physical constraints in DMD with a linear choice of dictionary. For some constraints, this framework can be extended to the setting of EDMD, thus providing an approximation of Koopman operators in the spirit of geometric integration \cite{hairer2010geometric}. A scenario where it is possible to extend piDMD to nonlinear observables occurs when the Koopman operator associated with the dynamical system is Hermitian, and one aims to compute finite-dimensional Hermitian approximations to preserve spectral properties. The corresponding algorithm, known as \textit{Hermitian Dynamic Mode Decomposition}, has been studied extensively by Drma{\v{c}}~\cite{drmavc2024hermitian}. However, there is currently no proof of convergence of this method to the spectral properties of the underlying Koopman operator, despite encouraging numerical results~\cite{baddoo2023physics,drmavc2024hermitian}.

This note addresses this gap by showing the convergence of \textit{Hermitian Dynamic Mode Decomposition} to the spectral properties of self-adjoint Koopman operators. Along the way, we derive \cref{thm_gen_convergence}, a result that may be of independent interest in the wider spectral community. For methods that compute spectral measures of general self-adjoint systems, see \cite{colbrook2019computing,colbrook2020computing}. All of our results naturally extend to the case when the Koopman operator is skew-Hermitian, another structure of broad interest. As an example, Koopman generators of invertible measure-preserving continuous-time dynamical systems are skew-adjoint. Hence, our analysis carries over to the application of (skew) Hermitian DMD to DMD methods that approximate such generators \cite{klus2020data}. Our results can also be extended to stochastic dynamical systems and the stochastic Koopman operator \cite{mezic2000comparison,vcrnjaric2020koopman,wanner2022robust,colbrook2023beyond}.

It is worth pointing out the myriad of papers on Koopman operators and DMD, as evidenced by various surveys~\cite{budivsic2012applied,brunton2021modern,colbrook2023multiverse,mezic2013analysis,otto2021koopman,schmid2022dynamic}. For example, the survey \cite{colbrook2023multiverse} characterizes structure-preserving methods as one of the flavors of DMD. Despite this widespread interest, convergence results pertaining to the relevant spectral properties of Koopman operators remain decidedly rare. Exceptions to this rule include methods with theoretical guarantees, such as \textit{Hankel-DMD}~\cite{arbabi2017ergodic}, \textit{Residual DMD}~\cite{colbrook2023residualJFM,colbrook2021rigorous}, \textit{Rigged DMD} \cite{colbrook2024rigged}, \textit{Measure-Preserving EDMD}~\cite{colbrook2023mpedmd}, compactification methods~\cite{das2021reproducing,valva2023consistent}, and periodic approximation methods~\cite{govindarajan2019approximation,govindarajan2021approximation}. While related to the present note, the latter four approaches assume that the system is measure-preserving, which differs from the setup addressed here. Given the keen interest in convergence results, we hope this note will encourage further exploration into the conditions under which methods like piDMD converge.

The rest of the paper is organized as follows. We provide preliminaries in \cref{sec_preliminary} on Koopman operators and EDMD. Then, in \cref{sec_self_adjoint}, we recall and derive Hermitian DMD in the context of EDMD. Finally, \cref{sec:convergence_results} contains our main convergence results, which are demonstrated numerically in \cref{sec:computational_example}.

\section{Preliminaries} \label{sec_preliminary}

In this section, we provide preliminaries on Koopman operators, EDMD, and the role of spectral measures for self-adjoint Koopman operators.

\subsection{Koopman operators}
\label{sec:Koopman_operators}

To define a Koopman operator, we begin with a space $\mathcal{F}$ of functions $g: \Omega \rightarrow \mathbb{C}$, where $\Omega$ is the state space of our dynamical system. The functions $g$, referred to as \textit{observables}, serve as tools for indirectly measuring the state of the system described in \eqref{eq:DynamicalSystem}. Specifically, $g(\xv_n)$ indirectly measures the state $\xv_n$. Koopman operators enable us to capture the time evolution of these observables through a linear operator framework. For a suitable domain $\mathcal{D}(\mathcal{K}) \subset \mathcal{F}$, we define the Koopman operator via the composition formula:
\begin{equation} \label{eq:KoopmanOperator}
    [\mathcal{K}g](\xv) = [g\circ \Fv](\xv)=g(\Fv(\xv)), \quad g\in \mathcal{D}(\mathcal{K}).
\end{equation}
In this context, $[\mathcal{K}g](\xv_n)= g(\Fv(\xv_n))=g(\xv_{n+1})$ represents the measurement of the state one time step ahead of $g(\mathbf{x}_n)$. This process effectively captures the dynamic progression of the system. The overarching concept is summarized in \cref{schematic}.

\begin{figure}[htbp]
    \centering
    \includegraphics[width=0.9\textwidth,trim={0mm 0mm 0mm 0mm},clip]{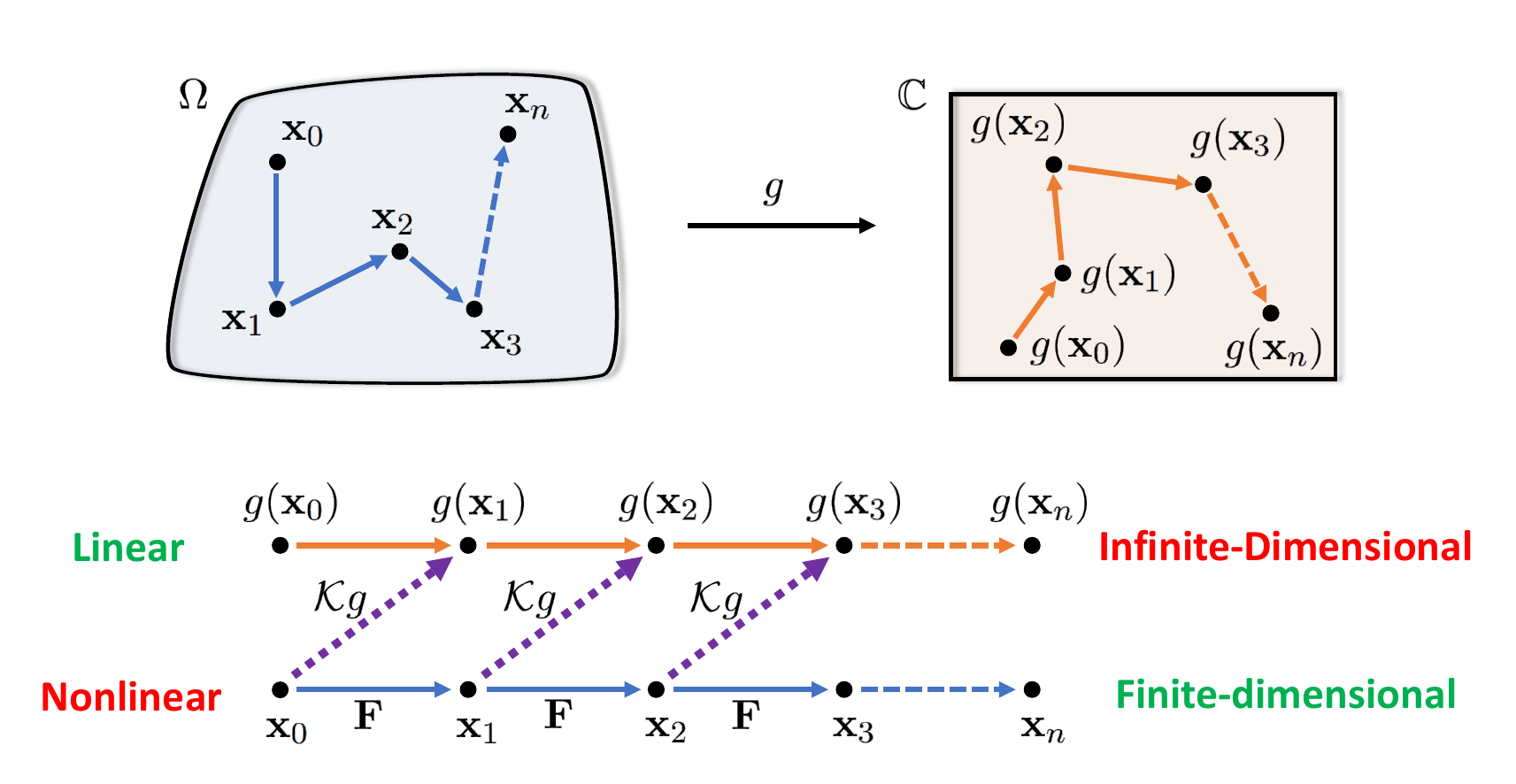}
    \caption{Summary of the idea of Koopman operators. By lifting to a space of observables, we trade a nonlinear finite-dimensional system for a linear infinite-dimensional system.}
    \label{schematic}
\end{figure}

The key property of the Koopman operator $\mathcal{K}$ is its \textit{linearity}. This linearity holds irrespective of whether the system's dynamics, as represented in \eqref{eq:DynamicalSystem}, are linear or nonlinear. Consequently, the spectral properties of $\mathcal{K}$ become a powerful tool to analyze the dynamical system's behavior. We focus on cases where $\mathcal{F}=L^2(\Omega,\omega)$ is a Hilbert space with the following inner product
$$
    \langle g_1,g_2 \rangle=\int_{\Omega} g_1(\xv)\overline{g_2(\xv)}\ \mathrm{d}\omega(\xv),
$$
for some positive measure $\omega$. In going from a pointwise definition in \eqref{eq:KoopmanOperator} to the space $L^2(\Omega,\omega)$, a little care is needed since $L^2(\Omega,\omega)$ consists of equivalence classes of functions. We assume that the map $\Fv$ is nonsingular with respect to $\omega$, meaning that
$$
    \omega(E)=0\quad \text{implies}\quad \omega(\{\xv:\Fv(\xv)\in E\})=0.
$$
This ensures that the Koopman operator is well-defined since $g_1(\xv)=g_2(\xv)$ for $\omega$-almost every $\xv$ implies that $g_1(\Fv(\xv))=g_2(\Fv(\xv))$ for $\omega$-almost every $\xv$.

The above Hilbert space setting is standard in most of the Koopman literature. It is crucial to recognize that the Koopman operator is not uniquely defined by the dynamical system in \eqref{eq:DynamicalSystem}; rather, it is fundamentally dependent on the choice of the space of observables $\mathcal{F}$. Since $\mathcal{K}$ acts on an \textit{infinite-dimensional} function space, we have exchanged the nonlinearity in \eqref{eq:DynamicalSystem} for an infinite-dimensional linear system. This means that the spectral properties of $\mathcal{K}$ can be significantly more complex than those of a finite matrix, making them more challenging to compute \cite{colbrook2020PhD,ben2015can}.

\subsection{Extended dynamic mode decomposition} \label{sec:EDMD}

The objective of EDMD is to approximate the action of the Koopman operator $\mathcal{K}$ on a finite-dimensional vector space of functions by a matrix $\Kv$. For the sake of simplicity, the initial formulation of EDMD assumes that the sample points $\{\mathbf{x}^{(m)}\}_{m=1}^M$ in the snapshot dataset are independently sampled from the distribution $\omega$. In this section, we consider the points $\xv^{(m)}$ as \textit{quadrature nodes} used for integration with respect to the measure $\omega$. This adaptability enables the choice of quadrature weights tailored to different scenarios.

One first chooses a dictionary of functions $\{\psi_1,\ldots,\psi_{N}\}$, i.e., a list of observables, in the space $\mathcal{D}(\mathcal{K})\subset L^2(\Omega,\omega)$. These observables form a finite-dimensional subspace $V_N=\mathrm{span}\{\psi_1,\ldots,\psi_{N}\}$. EDMD consists of selecting a matrix $\Kv\in\mathbb{C}^{N\times N}$ that approximates the action of $\mathcal{K}$ confined to this subspace, such that for $1\leq j\leq N$ we have,
\[
    [\mathcal{K}\psi_j](\xv) = \psi_j(\Fv(\xv)) \approx \sum_{i=1}^{N} \Kv_{ij} \psi_i(\xv).
\]
We define the vector-valued feature map, $\Psiv:\Omega\to \C^N$, as
\begin{equation}
    \label{feat_map_def}
    \Psiv(\xv)=\begin{bmatrix}\psi_1(\xv) & \cdots& \psi_{{N}}(\xv) \end{bmatrix}\in\mathbb{C}^{1\times {N}},\quad \xv\in\Omega.
\end{equation}
Any function $g\in V_{N}$ can be expressed as a linear combination of the basis functions as $g(\xv)=\sum_{j=1}^{N}\psi_j(\xv)\gv_j=\Psiv(\xv)\,\gv$, for some vector $\gv\in\mathbb{C}^{N}$. Therefore,
\[
    [\mathcal{K}g](\xv)=\Psiv(\Fv(\xv))\,\gv=\Psiv(\xv)(\Kv\,\gv)+\underbrace{\sum_{j=1}^{N}\psi_j(\Fv(\xv))\gv_j-\Psiv(\xv)(\Kv\,\gv)}_{R(\gv,\xv)}.
\]
In general, $V_{N}$ is not an invariant subspace of $\mathcal{K}$. Hence, there is no choice of $\Kv$ that makes the residual $R(\gv,\xv)$ zero for all $g\in V_N$ and $\omega$-almost every $\xv\in\Omega$. Instead, it is natural to select the matrix $\Kv$ to minimize the residual:
\begin{equation}\label{eq:ContinuousLeastSquaresProblem}
    \begin{aligned}
        \Kv & = \argmin_{\Kv\in\mathbb{C}^{N\times N}} \int_\Omega \max_{\substack{\gv\in\mathbb{C}^{N}                                                             \\\|\Cv\gv\|_{\ell^2}=1}}|R(\gv,\xv)|^2\ \mathrm{d}\omega(\xv)\\
            & =\argmin_{\Kv\in\mathbb{C}^{N\times N}} \int_\Omega \left\|\Psiv(\Fv(\xv))\Cv^{-1} - \Psiv(\xv)\Kv\Cv^{-1}\right\|^2_{\ell^2}\ \mathrm{d}\omega(\xv).
    \end{aligned}
\end{equation}
Here, $\Cv\in\R^{N\times N}$ is a positive self-adjoint matrix that controls the size of $g=\Psiv\gv$. One should interpret the matrix $\Cv$ as choosing an appropriate norm. This becomes important when $N\rightarrow\infty$ since not all the norms defined on an infinite-dimensional vector space are equivalent.

While it is not possible to directly evaluate the integral in \eqref{eq:ContinuousLeastSquaresProblem} from the snapshot data, one can instead approximate it via a quadrature rule with nodes $\{\xv^{(m)}\}_{m=1}^{M}$ and weights $\{w_m\}_{m=1}^{M}$. For notational convenience, we introduce the weight matrix $\Wv=\mathrm{diag}(w_1,\ldots,w_{M})$ and the matrices
\[
    \Psiv_X=\begin{pmatrix}
        \Psiv(\xv^{(1)}) \\
        \vdots           \\
        \Psiv(\xv^{(M)})
    \end{pmatrix}\in\mathbb{C}^{M\times N},\quad\text{and}\quad
    \Psiv_Y=\begin{pmatrix}
        \Psiv(\yv^{(1)}) \\
        \vdots           \\
        \Psiv(\yv^{(M)})
    \end{pmatrix}\in\mathbb{C}^{M\times N}.
\]
After discretizing \eqref{eq:ContinuousLeastSquaresProblem}, one obtains the following weighted least-squares problem:
\begin{equation} \label{EDMD_opt_prob2}
    \begin{aligned}
        \Kv & = \argmin_{\Kv\in\mathbb{C}^{N\times N}}\sum_{m=1}^{M} w_m\left\|\Psiv(\yv^{(m)})\Cv^{-1}-\Psiv(\xv^{(m)})\Kv\Cv^{-1}\right\|^2_{2} \\
            & =\argmin_{\Kv\in\mathbb{C}^{N\times N}} \left\|\Wv^{1/2}\Psiv_Y\Cv^{-1}-\Wv^{1/2}\Psiv_X \Kv\Cv^{-1}\right\|_{\mathrm{F}}^2,
    \end{aligned}
\end{equation}
where $\|\cdot\|_{\mathrm{F}}$ denotes the Frobenius norm. By reducing the size of the dictionary if necessary, we may assume without loss of generality that $\Wv^{1/2}\Psiv_X$ has rank $N$. A solution to \eqref{EDMD_opt_prob2} is given by
\[
    \Kv= (\Wv^{1/2}\Psiv_X)^\dagger \Wv^{1/2}\Psiv_Y=(\Psiv_X^*\Wv\Psiv_X)^\dagger\Psiv_X^*\Wv\Psiv_Y,
\]
where $\dagger$ denotes the Moore--Penrose pseudoinverse. The second equality follows since $\Wv^{1/2}\Psiv_X$ has linearly independent columns. Note that this solution is independent of the matrix $\Cv$. However, a suitable choice of $\Cv$ is vital once constraints on the matrix $\Kv$ are added to the optimization problem in \eqref{eq:ContinuousLeastSquaresProblem}~\cite{colbrook2023mpedmd}. In the case where the quadrature weights are equal and $\Psiv$ is the state (i.e., a linear dictionary), then $\Kv^\top$ is the transpose of the classical DMD matrix.

We now define the two correlation matrices $\Gv\in \C^{N\times N}$ and $\Av\in \C^{N\times N}$:
\begin{equation}
    \label{eq_EDMD_corr_matrices}
    \begin{aligned}
        \Gv & =\Psiv_X^*\Wv\Psiv_X=\sum_{m=1}^{M} w_m \Psiv(\xv^{(m)})^*\Psiv(\xv^{(m)}), \\
        \Av & =\Psiv_X^*\Wv\Psiv_Y=\sum_{m=1}^{M} w_m \Psiv(\xv^{(m)})^*\Psiv(\yv^{(m)}).
    \end{aligned}
\end{equation}
If we consider the discrete measure $\omega_M=\sum_{m=1}^Mw_m\delta_{\xv^{(m)}}$, then
\[
    \Gv_{jk}=\int_{\Omega} \overline{\psi_j(\xv)}\psi_k(\xv) \ \mathrm{d} \omega_M(\xv),\quad \Av_{jk}=\int_{\Omega} \overline{\psi_j(\xv)}\psi_k(\Fv(\xv)) \ \mathrm{d} \omega_M(\xv).
\]
Additionally, if the quadrature discretization converges as the number of data points $M\to\infty$, then we have
\begin{equation}
    \label{quad_convergence}
    \lim_{M\rightarrow\infty}\Gv_{jk} = \langle \psi_k,\psi_j \rangle,\quad \text{and}\quad \lim_{M\rightarrow\infty}\Av_{jk} = \langle \mathcal{K}\psi_k,\psi_j \rangle.
\end{equation}
Therefore, in the large data limit, $\Kv=\Gv^\dagger\Av$ approaches a matrix representation of the operator $\mathcal{P}_{V_{N}}\mathcal{K}\mathcal{P}_{V_{N}}^*$, where $\mathcal{P}_{V_{N}}:L^2(\Omega,\omega)\rightarrow V_{N}$ denotes the orthogonal projection onto the subspace $V_{N}$. In essence, EDMD is a Galerkin method. The EDMD eigenvalues thus approach the spectrum of $\mathcal{P}_{V_{N}}\mathcal{K}\mathcal{P}_{V_{N}}^*$, and EDMD is an example of the so-called \textit{finite section method} \cite{bottcher1983finite}.

\subsection{Spectral measures of self-adjoint Koopman operators}

If $g\in L^2(\Omega,\omega)$ is an \textit{eigenfunction} of $\mathcal{K}$ with \textit{eigenvalue} $\lambda$, then $g$ exhibits perfect coherence\footnote{In the setting of dynamical systems, coherent sets or structures are subsets of the phase space where elements (e.g., particles, agents, etc.) exhibit similar behavior over some time interval. This behavior remains relatively consistent despite potential perturbations or the chaotic nature of the system.} with
\begin{equation}
    \label{eq:perfectly_coherent}
    g(\xv_n)=[\mathcal{K}^ng](\xv_0)=\lambda^n g(\xv_0),\quad n\in\mathbb{N}.
\end{equation}
The oscillation and decay/growth of the observable $g$ are dictated by the complex argument and absolute value of the eigenvalue $\lambda$, respectively. In infinite dimensions, the appropriate generalization of the set of eigenvalues of $\mathcal{K}$ is the \textit{spectrum}, denoted by $\mathrm{Sp}(\mathcal{K})$, and defined as
$$
    \mathrm{Sp}(\mathcal{K})=\left\{z\in\mathbb{C} :(\mathcal{K} - zI)^{-1}\text{ does not exist as a bounded operator}\right\}\subset\mathbb{C}.
$$
Here, $I$ denotes the identity operator. The spectrum $\mathrm{Sp}(\mathcal{K})$ includes the set of eigenvalues of $\mathcal{K}$, but in general, $\mathrm{Sp}(\mathcal{K})$ contains points that are not eigenvalues. This is because there are more ways for $(\mathcal{K} - zI)^{-1}$ to not exist in infinite dimensions than in finite dimensions. For example, we may have continuous spectra.

From now on, we assume that $\mathcal{K}$ is a self-adjoint operator acting on $L^2(\Omega,\omega)$. Under this condition, the spectral theorem\footnote{For readers unfamiliar with the spectral theorem, \cite{halmos1963does} provides an excellent and readable introduction.}~\cite[Thm.~X.4.11]{conway2019course} allows us to diagonalize the Koopman operator $\mathcal{K}$, and its spectrum $\mathrm{Sp}(\mathcal{K})$ lies within the real line $\mathbb{R}$ since $\mathcal{K}$ is self-adjoint. There is a \textit{projection-valued measure} $\mathcal{E}$ supported on $\mathrm{Sp}(\mathcal{K})$, which associates an orthogonal projector with each Borel measurable subset of $\mathbb{R}$. For such a subset $S\subset\mathbb{R}$, $\mathcal{E}(S)$ is a projection onto the spectral elements of $\mathcal{K}$ inside $S$. For any observable $g\in \mathcal{D}(\mathcal{K})$,
\[
    g=\left(\int_\mathbb{R} \ \mathrm{d}\mathcal{E}(\lambda)\right)g \quad\text{and}\quad \mathcal{K}g=\left(\int_\mathbb{R} \lambda\ \mathrm{d}\mathcal{E}(\lambda)\right)g.
\]
The essence of this formula is the decomposition of $g$ according to the spectral content of $\mathcal{K}$. The projection-valued measure $\mathcal{E}$ simultaneously decomposes the space $L^2(\Omega,\omega)$ and diagonalizes the Koopman operator. For example, if $g\in\mathcal{D}(\mathcal{K}^n)$, we have
\begin{equation}
    \label{KMD_spec_meas_forward}
    g(\xv_n)= [\mathcal{K}^ng](\xv_0)= \left[\left(\int_\mathbb{R} \lambda^n\ \mathrm{d}\mathcal{E}(\lambda)\right)g\right](\xv_0).
\end{equation}
The spectral theorem offers a custom Fourier-type transform specifically for the operator $\mathcal{K}$ that extracts coherent features. \textit{Scalar-valued} spectral measures are of particular interest. Hence, given a normalized observable $g\in L^2(\Omega,\omega)$ with $\|g\| = 1$, the scalar-valued spectral measure of $\mathcal{K}$ with respect to $g$ is a probability measure defined as
\[
    \mu_g(S)=\langle \mathcal{E}(S)g,g \rangle.
\]
The spectral measure of $\mathcal{K}$ with respect to $g\in L^2(\Omega,\omega)$ is a signature for the forward-time dynamics of \eqref{eq:DynamicalSystem}. \rev{In this paper, we show that the spectral measures computed by Hermitian DMD converge to those of $\mathcal{K}$.}

\section{Hermitian Dynamic Mode Decomposition} \label{sec_self_adjoint}

When the Koopman operator $\K$ is Hermitian, i.e., $\mathcal{K}=\mathcal{K}^*$, a natural constraint is to preserve the Hermitian property on its finite-dimensional approximation, $\Kv$. This ensures that the spectral properties of $\Kv$ are consistent with those of $\K$ as the size of the dictionary increases. Generally, the solution to \eqref{EDMD_opt_prob2} is not Hermitian, and diverse strategies have been proposed to enforce this constraint~\cite{drmavc2024hermitian}. Here, we consider the Hermitian DMD algorithm introduced in~\cite{baddoo2023physics}.

First, given the Gram matrix $\Gv=\Psiv_X^*\Wv\Psiv_X$, one can approximate the inner product $\langle \cdot,\cdot\rangle$ on $L^2(\Omega,\omega)$ via the inner product induced by $\Gv$ as
\begin{equation} \label{inner_product_G}
    \langle \Psiv \gv,\Psiv \hv \rangle=\sum_{j,k=1}^N  \overline{h_j}g_k\langle \psi_k,\psi_j \rangle\approx\sum_{j,k=1}^N  \overline{h_j}g_k\Gv_{j,k}= \hv^*\Gv\gv.
\end{equation}
If \eqref{quad_convergence} holds, then this approximation converges to the inner product on $L^2(\Omega,\omega)$ in the large data limit as $M\rightarrow\infty$. We follow the EDMD approach described in \cref{sec:EDMD} but enforce the additional constraint that the matrix representation of the Koopman operator is self-adjoint with respect to the inner product induced by the matrix $\Gv$. Hence, we consider the following constrained least-square problem:
\begin{align*}
    \Kv & = \argmin_{\substack{\Kv\in \C^{N\times N} \\\Gv\Kv=\Kv^*\Gv}}\sum_{m=1}^M \left\|\Psiv(\yv^{(m)})\Gv^{-1/2}-\Psiv(\xv^{(m)})\Kv \Gv^{-1/2}\right\|_2^2\\
        & = \argmin_{\substack{\Kv\in \C^{N\times N} \\\Gv\Kv=\Kv^*\Gv}}\left\|\Wv^{1/2}\Psiv_Y\Gv^{-1/2}-\Wv^{1/2}\Psiv_X\Kv \Gv^{-1/2}\right\|_\F^2.
\end{align*}
After performing the change of variables $\Bv=\Gv^{1/2}\Kv \Gv^{-1/2}$, we obtain
\begin{equation}
    \label{symmetric_easy}
    \min_{\substack{\Bv\in \C^{N\times N}\\\Bv^*=\Bv}}\left\|\Wv^{1/2}\Psiv_Y \Gv^{-1/2}-\Wv^{1/2}\Psiv_X \Gv^{-1/2}\Bv\right\|_\F^2.
\end{equation}
Here, we recognize a symmetric Procrustes problem~\cite{baddoo2023physics,higham1988symmetric} of the form
$$
    \min_{\substack{\Mv\in \C^{N\times N}\\\Mv^*=\Mv}}\left\|\Yv-\Xv\Mv\right\|_\F^2.
$$
A solution can be computed from the economized singular value decomposition of the matrix $\Xv=\Uv\pmb{\Sigma} \Vv^*$, where $\pmb{\Sigma}$ is the $N\times N$ diagonal matrix containing the singular values $\sigma_1\geq \sigma_2\geq \cdots\geq \sigma_N\geq 0$. The solution is given by the matrix
$\Mv = \Vv \pmb{\Upsilon} \Vv^*$, where the entries of Hermitian matrix $\pmb{\Upsilon}\in \C^{N\times N}$ are defined by
$$
    \pmb{\Upsilon}_{ij} = \begin{cases}
        \frac{\sigma_i c_{ij}+\sigma_j \overline{c_{ji}}}{\sigma_i^2+\sigma_j^2}, & \text{if } \sigma_i^2 +\sigma_j^2 \neq 0, \\
        0,                                                                        & \text{otherwise},
    \end{cases}
$$
for $1\leq i,j,\leq N$. Here, the coefficients $c_{ij}$ are the entries of the matrix $\Cv=\Uv^* \Yv \Vv$. As observed by Higham~\cite{higham1988symmetric}, this algorithm is backward stable following the backward stability of the Golub--Reinsch SVD algorithm~\cite[Sec.~5.5.8]{golub1996matrix}.

We make an additional observation compared to the original formulation in~\cite{baddoo2023physics}, which considerably simplifies the algorithm to solve \eqref{symmetric_easy}. In the case of \eqref{symmetric_easy}, we have $\Xv=\Wv^{1/2}\Psiv_X \Gv^{-1/2}$ and $\Yv=\Wv^{1/2}\Psiv_Y \Gv^{-1/2}$, and the symmetric Procrustes algorithm can be simplified since $\Xv^*\Xv=\Iv$, the identity matrix. Hence, one can select $\pmb{\Sigma}$ and $\Vv$ to be the identity matrices, and the solution becomes
\begin{equation} \label{eq_new_procrustes}
    \Kv=\Gv^{-1/2}\Bv\Gv^{1/2}=\Gv^{-1}\left(\frac{\Av+\Av^*}{2} \right).
\end{equation}
Therefore, if the convergence in \eqref{quad_convergence} holds, the large data limit of Hermitian DMD becomes a Galerkin approximation of the Koopman operator since $\langle \mathcal{K}\psi_k,\psi_j \rangle=\langle \psi_k,\mathcal{K}\psi_j \rangle$. Additionally, the formula given by \eqref{eq_new_procrustes} is more computationally efficient than the standard symmetric Procrustes approach employed in~\cite{baddoo2023physics} as it avoids the computation of the SVD of $\Xv$, as well as a matrix square-root, which can be numerically unstable when $\Gv$ is ill-conditioned.

Given this convergence result, it is natural to study the convergence of this approximation as the size of the dictionary increases. It is well-known that spectra of Galerkin approximations of operators can suffer from discretization issues such as spectral pollution (spurious eigenvalues), spectral invisibility (missing parts of the spectrum), instabilities, and so forth \cite{colbrook2019compute,colbrook4,colbrook3}. These issues occur and, in fact, can worsen as the dictionary's size increases. Hence, the convergence of the Hermitian DMD algorithm might be difficult to establish. Nevertheless, we will show in \cref{sec:convergence_results} that the (scalar-valued) spectral measures converge weakly, thus providing theoretical guarantees for Hermitian DMD. One crucial property exploited in the proof is the self-adjointness of the finite-dimensional approximations, as imposed by Hermitian DMD.

\section{A general convergence result}
\label{sec:convergence_results}

Throughout this section, we consider an arbitrary self-adjoint operator $\mathcal{L}$ acting on a Hilbert space $\mathcal{H}$ with domain $\mathcal{D}(\mathcal{L})$. \rev{For example, the differential operator $-\mathrm{d}^2/\mathrm{d}x^2$ on $L^2(\mathbb{R})$ is self-adjoint with domain $\{g\in L^2(\mathbb{R}):g''\in L^2(\mathbb{R})\}$. This is an example of an unbounded operator (recall that an operator $A$ is bounded if $\sup_{x\in\mathcal{D}(A),\|x\|=1}\|Ax\|<\infty$). Moreover, the operators in this section need not be Koopman operators. } Let $\{\mathcal{P}_n:\mathcal{H}\rightarrow \mathcal{V}_n\}_{n\in\mathbb{N}}$ be a sequence of orthogonal projections onto a Hilbert space $\mathcal{V}_n\subset \mathcal{H}$, such that $\mathcal{P}_n^*\mathcal{P}_n$ converges strongly to the identity, meaning that
$$
    \lim_{n\rightarrow \infty}\|\mathcal{P}_n^*\mathcal{P}_n u-u\|=0,\quad u\in\mathcal{H}.
$$
We also assume that $\mathcal{V}_n\subset\mathcal{D}(\mathcal{L})$. We are interested in the spectral measure of $\mathcal{L}$ with respect to a vector $v\in \mathcal{H}$, which we denote by $\mu_v$. We let $\mu_{v,n}$ represent the scalar-valued spectral measure of $\mathcal{P}_n\mathcal{L}\mathcal{P}_n^*$ with respect to the vector $\mathcal{P}_nv\in\mathcal{V}_n$. We will show that the spectral measures $\mu_{v,n}$ converge weakly to $\mu_v$. While this is a well-known result for bounded self-adjoint operators, the standard proof does not carry over to the unbounded case, which requires a different approach.

\subsection{Bounded operators - the easy case}

The case when $\mathcal{L}$ is bounded is well-known. The following standard proof of convergence essentially boils down to a moment-matching procedure.

\begin{lemma}
    \label{lem:FS_moments}
    Consider the above setup and suppose that $\mathcal{L}$ is bounded. Then, for any bounded, continuous, function $\phi$ on $\mathbb{R}$ and for any $v\in\mathcal{H}$,
    \begin{equation}\label{eq:meaning_weak_strong}
        \lim_{n\rightarrow\infty}\int_\mathbb{R} \phi(\lambda)\dd\mu_{v,n}(\lambda)=\int_\mathbb{R} \phi(\lambda)\dd\mu_v(\lambda).
    \end{equation}
    In particular, $\mu_{v,n}$ converges weakly to $\mu_v$ as $n\rightarrow\infty$.
\end{lemma}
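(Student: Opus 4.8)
The plan is to reduce \eqref{eq:meaning_weak_strong} to a moment-matching statement and then upgrade it to general test functions by a density argument that relies crucially on boundedness. Recall from the functional calculus associated with the projection-valued measure of $\mathcal{L}$ that $\int_\mathbb{R}\phi\dd\mu_v=\langle \phi(\mathcal{L})v,v\rangle$, and, writing $\mathcal{L}_n:=\mathcal{P}_n\mathcal{L}\mathcal{P}_n^*$ for the compression of $\mathcal{L}$ to $\mathcal{V}_n$, that $\int_\mathbb{R}\phi\dd\mu_{v,n}=\langle \phi(\mathcal{L}_n)\mathcal{P}_n v,\mathcal{P}_n v\rangle$. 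By linearity it suffices to prove \eqref{eq:meaning_weak_strong} first for the monomials $\phi(\lambda)=\lambda^k$, $k\ge 0$, and then to handle an arbitrary bounded continuous $\phi$ by approximation.

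For the monomials I would set $Q_n:=\mathcal{P}_n^*\mathcal{P}_n$, the orthogonal projection of $\mathcal{H}$ onto $\mathcal{V}_n$, which by hypothesis converges strongly to the identity. Using $\mathcal{P}_n\mathcal{P}_n^*=I_{\mathcal{V}_n}$, a telescoping computation gives
\[
    \int_\mathbb{R}\lambda^k\dd\mu_{v,n}(\lambda)=\langle \mathcal{L}_n^k\mathcal{P}_n v,\mathcal{P}_n v\rangle=\langle (Q_n\mathcal{L})^k Q_n v,v\rangle.
\]
Since $\|Q_n\|\le 1$ and $\mathcal{L}$ is bounded, the partial products are uniformly bounded by $\|\mathcal{L}\|^k$, and because strong convergence is preserved under products of uniformly bounded operators, $(Q_n\mathcal{L})^k Q_n$ converges strongly to $\mathcal{L}^k$. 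Pairing with $v$ then yields $\int\lambda^k\dd\mu_{v,n}\to\langle \mathcal{L}^k v,v\rangle=\int\lambda^k\dd\mu_v$, which is the moment-matching claim.

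To pass from monomials to an arbitrary bounded continuous $\phi$, I would exploit that boundedness confines every measure to a single compact set: since $\|\mathcal{L}_n\|\le\|\mathcal{L}\|$, both $\mu_v$ and all the $\mu_{v,n}$ are supported in $K:=[-\|\mathcal{L}\|,\|\mathcal{L}\|]$, with masses $\mu_v(K)=\|v\|^2$ and $\mu_{v,n}(K)=\|\mathcal{P}_n v\|^2\le\|v\|^2$. Given $\varepsilon>0$, the Weierstrass approximation theorem provides a polynomial $p$ with $\sup_{K}|\phi-p|<\varepsilon$, and the split
\[
    \Bigl|\int\phi\dd\mu_{v,n}-\int\phi\dd\mu_v\Bigr|\le\Bigl|\int(\phi-p)\dd\mu_{v,n}\Bigr|+\Bigl|\int p\dd\mu_{v,n}-\int p\dd\mu_v\Bigr|+\Bigl|\int(p-\phi)\dd\mu_v\Bigr|
\]
bounds the outer two terms by $\varepsilon\|v\|^2$ each, while the middle term tends to $0$ by the previous step. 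Sending $n\to\infty$ and then $\varepsilon\to 0$ gives \eqref{eq:meaning_weak_strong}. As this holds for all bounded continuous $\phi$, and the total masses $\|\mathcal{P}_n v\|^2\to\|v\|^2$ converge, we obtain genuine weak convergence $\mu_{v,n}\rightharpoonup\mu_v$.

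The calculations here are routine; the only genuine ingredient is the uniform compact support in the last paragraph, which is exactly what boundedness of $\mathcal{L}$ provides. I expect this to be the one step that does \emph{not} survive in the unbounded setting: there the moments $\langle\mathcal{L}^k v,v\rangle$ may fail to be finite for $v\notin\mathcal{D}(\mathcal{L}^k)$, and without a common compact support the Weierstrass density argument collapses, so moment matching can no longer be bootstrapped to convergence against all bounded continuous test functions. This is precisely why a different approach is needed for \cref{thm_gen_convergence}, and why the bounded case is the ``easy'' one.
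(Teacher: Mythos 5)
Your proof is correct and follows essentially the same route as the paper: reduce to the moments $\phi(\lambda)=\lambda^k$ using that boundedness of $\mathcal{L}$ confines all the measures to the common compact set $[-\|\mathcal{L}\|,\|\mathcal{L}\|]$, then obtain moment convergence from the strong convergence of $\mathcal{P}_n^*(\mathcal{P}_n\mathcal{L}\mathcal{P}_n^*)^k\mathcal{P}_n=(\mathcal{P}_n^*\mathcal{P}_n\mathcal{L})^k\mathcal{P}_n^*\mathcal{P}_n$ to $\mathcal{L}^k$ via uniformly bounded products. Your explicit Weierstrass $\varepsilon$-splitting and the closing remark about why the argument fails for unbounded operators simply spell out details the paper leaves implicit.
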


\begin{proof}
    Since $\|\mathcal{P}_n\mathcal{L}\mathcal{P}_n^*\|\leq \|\mathcal{L}\|$, $\spec(\mathcal{P}_n\mathcal{L}\mathcal{P}_n^*)$ is contained in some finite fixed interval. Since the support of $\mu_{v,n}$ is contained in $\spec(\mathcal{P}_n\mathcal{L}\mathcal{P}_n^*)$, without loss of generality, we assume that $ \phi $ is supported on a finite interval. Since any such function can be approximated to arbitrary accuracy by a polynomial, it is enough to prove \eqref{eq:meaning_weak_strong} for $\phi(\lambda)=\lambda^k$, for any $k\in\mathbb{Z}_{\geq 0}$. In other words, it is enough to show that the moments of the measures converge.

    The functional calculus shows that
    $$
        \int_\mathbb{R} \lambda^k\dd\mu_{v,n}(\lambda)=\langle\mathcal{P}_n^*(\mathcal{P}_n\mathcal{L}\mathcal{P}_n^*)^k \mathcal{P}_n v,v\rangle,\quad \int_\mathbb{R} \lambda^k\dd\mu_v(\lambda)=\langle \mathcal{L}^k v,v\rangle.
    $$
    Since $\mathcal{L}$ is bounded and $\mathcal{P}_n^*\mathcal{P}_n$ converges strongly to the identity,
    $$
        \mathcal{P}_n^*(\mathcal{P}_n\mathcal{L}\mathcal{P}_n^*)^k \mathcal{P}_n=\mathcal{P}_n^*\mathcal{P}_n(\mathcal{L}\mathcal{P}_n^*\mathcal{P}_n)^k
    $$
    converges strongly to $\mathcal{L}^k$ for any $k\in\mathbb{Z}_{\geq 0}$. It follows that $\mathcal{P}_n^*(\mathcal{P}_n\mathcal{L}\mathcal{P}_n^*)^k \mathcal{P}_n v$ converges weakly to $\mathcal{L}^k v$. The convergence in \eqref{eq:meaning_weak_strong} follows.
\end{proof}

\begin{remark}
    Note that we proved strong convergence of the sequence of operators $\mathcal{P}_n^*(\mathcal{P}_n\mathcal{L}\mathcal{P}_n^*)^k \mathcal{P}_n$. The proof, therefore, automatically upgrades the lemma to weak convergence (in the sense of measures) of projection-valued spectral measures in the strong operator topology.
\end{remark}

The above proof cannot be carried over to unbounded operators since the moments may not exist. Instead, we replace the polynomials appearing in the proof of \cref{lem:FS_moments} with rational functions. To do this, we must look at the resolvent.

\subsection{Strong convergence of the resolvent}

To deal with unbounded operators, we make the following assumption.

\begin{assumption}
    \label{core_assump}
    There exists a core $S\subset \mathcal{D}(\mathcal{L})$ such that
    $$
        \lim_{n\rightarrow\infty}\mathcal{L}\mathcal{P}_n^*\mathcal{P}_nu=\mathcal{L}u,\quad \lim_{n\rightarrow\infty}\mathcal{P}_n^*\mathcal{P}_nu=u\quad \forall u\in S.
    $$
\end{assumption}

Without such assumptions, spectral measures need not converge \cite[Theorem 2.3]{Levitin}. The following lemma shows that with this assumption, the resolvents of our projected operators converge strongly to the resolvent of $\mathcal{L}$. For a discussion of these kinds of results, see \cite[Chapter VIII]{kato2013perturbation} and \cite{weidmann1997strong,rellich1941storungstheorie}.

\begin{lemma}
    \label{strong_res_lemma}
    Suppose that \cref{core_assump} holds. Then
    $$
        \lim_{n\rightarrow\infty} \mathcal{P}_n^*[\mathcal{P}_n(\mathcal{L}-zI)\mathcal{P}_n^*]^{-1}\mathcal{P}_nv=(\mathcal{L}-zI)^{-1}v,\quad\forall v\in\mathcal{H},z\in\mathbb{C}\backslash\mathbb{R}.
    $$
\end{lemma}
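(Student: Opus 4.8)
The plan is to exploit the self-adjointness of the finite sections to obtain a uniform resolvent bound, and then to prove convergence on a dense subspace cleverly built from the core $S$.

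First I would record the structural fact that makes everything work. Write $\mathcal{L}_n := \mathcal{P}_n\mathcal{L}\mathcal{P}_n^*$. Since $\mathcal{V}_n\subset\mathcal{D}(\mathcal{L})$, the operator $\mathcal{L}\mathcal{P}_n^*$ is defined on all of $\mathcal{V}_n$, so $\mathcal{L}_n$ is everywhere defined on $\mathcal{V}_n$; a one-line computation using self-adjointness of $\mathcal{L}$ shows it is symmetric, and hence (by Hellinger--Toeplitz, if one allows $\mathcal{V}_n$ to be infinite-dimensional) it is bounded and self-adjoint on $\mathcal{V}_n$. Consequently, for every $z\in\mathbb{C}\backslash\mathbb{R}$ the inverse $(\mathcal{L}_n-zI)^{-1}$ exists as a bounded operator with $\|(\mathcal{L}_n-zI)^{-1}\|\le 1/|\mathrm{Im}(z)|$. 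Since $\|\mathcal{P}_n\|=\|\mathcal{P}_n^*\|\le 1$, the operators $R_n(z):=\mathcal{P}_n^*(\mathcal{L}_n-zI)^{-1}\mathcal{P}_n$ are uniformly bounded by $1/|\mathrm{Im}(z)|$. Together with the boundedness of $(\mathcal{L}-zI)^{-1}$, this uniform bound means it suffices to prove $R_n(z)v\to(\mathcal{L}-zI)^{-1}v$ on a dense subspace of $\mathcal{H}$, since a standard $\varepsilon/3$ argument then extends the convergence to all of $\mathcal{H}$.

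The natural dense subspace is $(\mathcal{L}-zI)S$. Because $S$ is a core and $(\mathcal{L}-zI):\mathcal{D}(\mathcal{L})\to\mathcal{H}$ is a bijection with bounded inverse, this set is dense: given $v$, set $u=(\mathcal{L}-zI)^{-1}v$, approximate $u$ by $u_k\in S$ in the graph norm, and note $(\mathcal{L}-zI)u_k\to v$. So I fix $u\in S$ and $v=(\mathcal{L}-zI)u$, whose exact resolvent is $(\mathcal{L}-zI)^{-1}v=u$. The heart of the argument is a consistency identity: computing directly,
\[
(\mathcal{L}_n-zI)\mathcal{P}_nu=\mathcal{P}_n(\mathcal{L}\mathcal{P}_n^*\mathcal{P}_nu-zu)=\mathcal{P}_nv+\mathcal{P}_n e_n,\qquad e_n:=\mathcal{L}\mathcal{P}_n^*\mathcal{P}_nu-\mathcal{L}u.
\]
Applying $(\mathcal{L}_n-zI)^{-1}$ and then $\mathcal{P}_n^*$ rearranges this to $R_n(z)v=\mathcal{P}_n^*\mathcal{P}_nu-\mathcal{P}_n^*(\mathcal{L}_n-zI)^{-1}\mathcal{P}_ne_n$. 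By \cref{core_assump} we have $\mathcal{P}_n^*\mathcal{P}_nu\to u$ and $e_n\to0$; combined with the uniform resolvent bound, the second term is controlled by $\|e_n\|/|\mathrm{Im}(z)|\to0$, so $R_n(z)v\to u=(\mathcal{L}-zI)^{-1}v$, as required.

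I expect the main obstacle to be conceptual rather than computational: recognizing that one should test the resolvent against vectors of the form $v=(\mathcal{L}-zI)u$ with $u\in S$, since this is exactly where the core hypothesis converts the scheme's consistency $\mathcal{L}\mathcal{P}_n^*\mathcal{P}_nu\to\mathcal{L}u$ into resolvent convergence. A naive attempt to work with a generic $v\in\mathcal{H}$ stalls because $(\mathcal{L}-zI)^{-1}v$ need not lie in $S$ and $\mathcal{L}$ is unbounded, so no moment- or polynomial-based argument (as in \cref{lem:FS_moments}) survives. The only delicate technical point is confirming that $\mathcal{L}_n$ is genuinely self-adjoint, which underlies the crucial $1/|\mathrm{Im}(z)|$ bound that keeps the error term $\mathcal{P}_n^*(\mathcal{L}_n-zI)^{-1}\mathcal{P}_ne_n$ uniformly under control.
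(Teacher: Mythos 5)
Your proposal is correct and follows essentially the same route as the paper's proof: establish the uniform resolvent bound $\|(\mathcal{L}_n-zI)^{-1}\|\le 1/|\mathrm{Im}(z)|$ from self-adjointness of the finite sections, prove convergence for $v=(\mathcal{L}-zI)u$ with $u\in S$ by combining the consistency identity with \cref{core_assump}, and then extend to all $v\in\mathcal{H}$ by density of $(\mathcal{L}-zI)S$ and the uniform bounds. Your explicit isolation of the error term $e_n$ and the Hellinger--Toeplitz justification are minor presentational refinements of the same argument.
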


\begin{proof}
    Fix $z\in\mathbb{C}\backslash\mathbb{R}$ and suppose first that there exists $u\in S$ with $(\mathcal{L}-zI)^{-1}v=u$. For notational convenience, let $T_n=[\mathcal{P}_n(\mathcal{L}-zI)\mathcal{P}_n^*]^{-1}$, which exists since $\spec(\mathcal{P}_n\mathcal{L}\mathcal{P}_n^*)\subset\mathbb{R}$. We may write
    $$
        \mathcal{P}_n^*\mathcal{P}_n(\mathcal{L}-zI)^{-1}v=\mathcal{P}_n^*\mathcal{P}_nu=\mathcal{P}_n^*T_n\mathcal{P}_n\mathcal{P}_n^*[\mathcal{P}_n(\mathcal{L}-zI)\mathcal{P}_n^*]\mathcal{P}_nu.
    $$
    This quantity converges to $(\mathcal{L}-zI)^{-1}v$, and hence it is enough to prove that
    $$
        \lim_{n\rightarrow\infty} \mathcal{P}_n^*T_n\mathcal{P}_nv-\mathcal{P}_n^*T_n\mathcal{P}_n\mathcal{P}_n^*[\mathcal{P}_n(\mathcal{L}-zI)\mathcal{P}_n^*]\mathcal{P}_nu=0.
    $$
    The quantity on the left is equal to
    $$
        \mathcal{P}_n^*T_n\mathcal{P}_n\left[(\mathcal{L}-zI)-\mathcal{P}_n^*\mathcal{P}_n(\mathcal{L}-zI)\mathcal{P}_n^*\mathcal{P}_n\right]u.
    $$
    Since $\|T_n\|\leq 1/|\mathrm{Im}(z)|$, \cref{core_assump} shows that this converges to zero. Hence, to prove the lemma, we must show that we can drop the assumption that $u\in S$. Let $u=(\mathcal{L}-zI)^{-1}v$ be general, then there exists a sequence $\{u_m\}\subset S$ with $\lim_{m\rightarrow\infty}u_m=u$ and $\lim_{m\rightarrow\infty}\mathcal{L}u_m=\mathcal{L}u$. Let $v_m=(\mathcal{L}-zI)u_m$, then we have just shown that
    $$
        \lim_{n\rightarrow\infty} \mathcal{P}_n^*[\mathcal{P}_n(\mathcal{L}-zI)\mathcal{P}_n^*]^{-1}\mathcal{P}_nv_m=(\mathcal{L}-zI)^{-1}v_m.
    $$
    Now $\lim_{m\rightarrow\infty}v_m=v$ and $\|T_n\|,\|(\mathcal{L}-zI)^{-1}\|\leq 1/|\mathrm{Im}(z)|$. Hence, the result also holds with $v$.
\end{proof}

\subsection{General convergence theorem}

We now have all of the tools needed to prove the convergence of spectral measures. We first prove the general result and then the result for Hermitian DMD. All the required assumptions are stated explicitly in the theorems.

\begin{theorem}
    \label{thm_gen_convergence}
    Let $\mathcal{L}$ be a self-adjoint operator on a Hilbert space $\mathcal{H}$ with domain $\mathcal{D}(\mathcal{L})$. Let $\{\mathcal{P}_n:\mathcal{H}\rightarrow \mathcal{V}_n\}_{n\in\mathbb{N}}$ be a sequence of orthogonal projections onto a Hilbert space $\mathcal{V}_n\subset \mathcal{H}$, such that $\mathcal{P}_n^*\mathcal{P}_n$ converges strongly to the identity and $\mathcal{V}_n\subset\mathcal{D}(\mathcal{L})$. In addition, suppose that \cref{core_assump} holds. Then, for any $v\in\mathcal{H}$ and any bounded continuous function $f$ on $\mathbb{R}$,
    \begin{equation}
        \label{wanted_conv}
        \lim_{n\rightarrow\infty}\int_{\mathbb{R}} f(\lambda) \d \mu_{v,n}(\lambda)=\int_{\mathbb{R}} f(\lambda) \d \mu_{v}(\lambda),
    \end{equation}
    where $\mu_{v,n}$ is the spectral measure of $\mathcal{P}_n\mathcal{L}\mathcal{P}_n^*$ w.r.t. the vector $v_n=\mathcal{P}_nv$.
\end{theorem}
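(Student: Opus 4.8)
The plan is to follow the moment-matching strategy behind \cref{lem:FS_moments}, but with the monomials $\lambda^k$ replaced by the resolvent (Cauchy--Stieltjes) functions $r_z(\lambda)=(\lambda-z)^{-1}$ for $z\in\mathbb{C}\backslash\mathbb{R}$. These are bounded and continuous, their integrals against $\mu_{v,n}$ and $\mu_v$ are controlled by resolvents rather than by moments (which may not exist in the unbounded case), and the required resolvent convergence has already been secured in \cref{strong_res_lemma}. The bulk of the work is then to convert convergence against resolvent functions into convergence against all bounded continuous $f$.

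First I would record the functional-calculus identity. Writing $v_n=\mathcal{P}_nv$ and using $\mathcal{P}_n\mathcal{P}_n^*=I$ on $\mathcal{V}_n$ (so that $\mathcal{P}_n(\mathcal{L}-zI)\mathcal{P}_n^*=\mathcal{P}_n\mathcal{L}\mathcal{P}_n^*-zI_{\mathcal{V}_n}$), the functional calculus for the self-adjoint operator $\mathcal{P}_n\mathcal{L}\mathcal{P}_n^*$ gives
$$\int_{\mathbb{R}} r_z(\lambda)\dd\mu_{v,n}(\lambda)=\big\langle [\mathcal{P}_n(\mathcal{L}-zI)\mathcal{P}_n^*]^{-1}v_n,v_n\big\rangle=\big\langle \mathcal{P}_n^*[\mathcal{P}_n(\mathcal{L}-zI)\mathcal{P}_n^*]^{-1}\mathcal{P}_nv,\,v\big\rangle,$$
where the second equality uses $\langle \mathcal{P}_n^*a,v\rangle=\langle a,\mathcal{P}_nv\rangle$. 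By \cref{strong_res_lemma} the inner vector converges strongly, hence weakly, to $(\mathcal{L}-zI)^{-1}v$, so the right-hand side tends to $\langle(\mathcal{L}-zI)^{-1}v,v\rangle=\int_{\mathbb{R}} r_z\dd\mu_v$. This is exactly \eqref{wanted_conv} for $f=r_z$.

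Next I would bootstrap from resolvents to all of $C_0(\mathbb{R})$. Let $\mathcal{S}$ be the set of $f\in C_0(\mathbb{R})$ for which $\mathcal{P}_n^*f(\mathcal{P}_n\mathcal{L}\mathcal{P}_n^*)\mathcal{P}_n$ converges strongly to $f(\mathcal{L})$. The previous step shows $r_z\in\mathcal{S}$ for all $z\in\mathbb{C}\backslash\mathbb{R}$, and $\overline{r_z}=r_{\bar z}$ shows $\mathcal{S}$ is closed under conjugation. Because $\|f(A)\|\le\|f\|_\infty$ for any self-adjoint $A$, the set $\mathcal{S}$ is closed in the supremum norm; and because $\mathcal{P}_n\mathcal{P}_n^*=I_{\mathcal{V}_n}$ lets one factor $\mathcal{P}_n^*(fg)(\mathcal{P}_n\mathcal{L}\mathcal{P}_n^*)\mathcal{P}_n=(\mathcal{P}_n^*f(\mathcal{P}_n\mathcal{L}\mathcal{P}_n^*)\mathcal{P}_n)(\mathcal{P}_n^*g(\mathcal{P}_n\mathcal{L}\mathcal{P}_n^*)\mathcal{P}_n)$ into a product of uniformly bounded strongly convergent operators, $\mathcal{S}$ is an algebra. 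The resolvent functions separate points of $\mathbb{R}$ and vanish nowhere, so the Stone--Weierstrass theorem forces $\mathcal{S}=C_0(\mathbb{R})$. Taking inner products with $v$ yields \eqref{wanted_conv} for every $f\in C_0(\mathbb{R})$.

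The final step, which I expect to be the main obstacle, is to pass from $C_0(\mathbb{R})$ to arbitrary bounded continuous $f$; here mass could in principle escape to infinity under mere vague convergence. This is ruled out by the convergence of total masses: $\mu_{v,n}(\mathbb{R})=\|v_n\|^2=\langle\mathcal{P}_n^*\mathcal{P}_nv,v\rangle\to\|v\|^2=\mu_v(\mathbb{R})$, since $\mathcal{P}_n^*\mathcal{P}_n\to I$ strongly. Given bounded continuous $f$ and $\varepsilon>0$, I would choose a compact $K$ with $\mu_v(\mathbb{R}\backslash K)<\varepsilon$ and a cutoff $\chi\in C_0(\mathbb{R})$, $0\le\chi\le 1$, $\chi\equiv1$ on $K$, so that $f\chi\in C_0(\mathbb{R})$ converges correctly while the tails obey $|\int_{\mathbb{R}} f(1-\chi)\dd\mu_{v,n}|\le\|f\|_\infty(\mu_{v,n}(\mathbb{R})-\int_{\mathbb{R}}\chi\dd\mu_{v,n})\to\|f\|_\infty\int_{\mathbb{R}}(1-\chi)\dd\mu_v\le\|f\|_\infty\varepsilon$. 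Letting $\varepsilon\to0$ completes the proof of \eqref{wanted_conv} for all bounded continuous $f$.
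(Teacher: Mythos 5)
Your proof is correct and follows essentially the same route as the paper's: the functional-calculus identity for resolvent functions, \cref{strong_res_lemma} to handle $f=r_z$, Stone--Weierstrass on $C_0(\mathbb{R})$ to get vague convergence, and convergence of total masses $\mu_{v,n}(\mathbb{R})\to\|v\|^2$ to upgrade to weak convergence. You simply spell out in more detail the steps the paper leaves implicit (verifying that the relevant set of functions is a closed, conjugation-stable, point-separating subalgebra, and the explicit cutoff argument for tightness), and your observation that the convergence is actually strong operator convergence of $\mathcal{P}_n^*f(\mathcal{P}_n\mathcal{L}\mathcal{P}_n^*)\mathcal{P}_n$ mirrors the remark the paper makes after \cref{lem:FS_moments}.
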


\begin{proof}
    The idea of the proof is to use rational functions in the application of the Stone--Weierstrass theorem to first prove vague convergence of measures, and then upgrade the convergence to weak convergence using tightness. We first let $f(\lambda)=1/(\lambda-z)$ for $z\in\mathbb{C}\backslash\mathbb{R}$. For this choice, the functional calculus shows that
    \begin{align*}
        \int_\mathbb{R} f(\lambda)\dd\mu_{v,n}(\lambda) & =\langle\mathcal{P}_n^*[\mathcal{P}_n(\mathcal{L}-zI)\mathcal{P}_n^*]^{-1} \mathcal{P}_n v,v\rangle, \\
        \int_\mathbb{R} f(\lambda)\dd\mu_v(\lambda)     & =\langle (\mathcal{L}-zI)^{-1} v,v\rangle.
    \end{align*}
    \cref{strong_res_lemma} now shows that \eqref{wanted_conv} holds for this particular choice of $f$. The Stone--Weierstrass theorem (for locally compact Hausdorff spaces) then implies that \eqref{wanted_conv} holds if $\lim_{|x|\rightarrow\infty}f(x)=0$. To finish the proof, note that $\lim_{n\rightarrow\infty}\mu_{v,n}(\mathbb{R})=\mu_v(\mathbb{R})=\|v\|^2$, which implies weak convergence given the vague convergence.
\end{proof}

The convergence of Hermitian DMD now follows almost immediately. For a given observable $g\in L^2(\Omega,\omega)$, we define
$$
    \gv_{N,M}= (\Wv^{1/2}\Psiv_X)^{\dagger}\Wv^{1/2}\left(g(\xv^{(1)}),\ldots,g(\xv^{(M)})\right)^\top.
$$
such that
$$
    \lim_{M\rightarrow\infty}\gv_{N,M}=\gv_{N},\quad\text{with}\quad \mathcal{P}_Ng=\Psiv \gv_{N},
$$
\rev{where $\Psiv$ is the dictionary feature map in \cref{feat_map_def}.}
We emphasize that this is the standard way to compute expansion coefficients in the EDMD algorithm.

\begin{theorem}[Convergence of Hermitian DMD]
    \label{thm:convergence_of_hermitianDMD}
    Consider the space of functions $V_N=\mathrm{span}\{\psi_1,\ldots,\psi_N\}$ and let $\mathcal{P}_N:L^2(\Omega,\omega)\rightarrow V_N$ be the corresponding orthogonal projection. Suppose that $\mathcal{P}_N^*\mathcal{P}_N$ converges strongly to the identity, the Koopman operator $\mathcal{K}$ is self-adjoint, the quadrature rule converges as in \eqref{quad_convergence}, and that
    $$
        \mathcal{K}u= \lim_{N\rightarrow\infty}\mathcal{P}_N\mathcal{K}\mathcal{P}_N^*\mathcal{P}_N u,\quad  u\in\mathcal{D}(\mathcal{K}).
    $$
    Then, for any observable $g\in L^2(\Omega,\omega)$ and any bounded continuous function $f$ on $\mathbb{R}$,
    \begin{equation}
        \label{eq_conv_of_HDMD}
        \lim_{N\rightarrow\infty}\lim_{M\rightarrow\infty} \int_{\mathbb{R}} f(\lambda)\d \mu_{\gv_{N,M}}^{(M)}(\lambda)= \int_{\mathbb{R}} f(\lambda)\d \mu_g(\lambda),
    \end{equation}
    were $\mu_{\gv_{N,M}}^{(M)}$ is the spectral measure of $\Psiv \gv_{N,M}$ with respect to the Hermitian DMD matrix.
\end{theorem}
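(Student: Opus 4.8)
The plan is to evaluate the iterated limit one variable at a time: first the inner data limit $M\to\infty$ at fixed dictionary size $N$, which is a purely finite-dimensional continuity statement, and then the outer limit $N\to\infty$, which is governed by \cref{thm_gen_convergence} applied with $\mathcal{L}=\mathcal{K}$, $\mathcal{V}_N=V_N$, and $v=g$. Since \eqref{eq_conv_of_HDMD} is written as an iterated (not joint) limit, no interchange of limits is needed, which is what keeps the argument clean.

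For the inner limit, fix $N$ and a bounded continuous $f$. The Hermitian DMD matrix $\Kv=\Gv^{-1}(\Av+\Av^*)/2$ is self-adjoint in the $\Gv$-inner product, so writing $\Bv=\Gv^{1/2}\Kv\Gv^{-1/2}$ (Hermitian) and using the functional calculus gives
\[
  \int_{\mathbb{R}} f\,\d\mu^{(M)}_{\gv_{N,M}}=(\Gv^{1/2}\gv_{N,M})^*\,f(\Bv)\,(\Gv^{1/2}\gv_{N,M}),
\]
whose right-hand side is continuous in the Hermitian matrix $\Bv$ and the vector $\Gv^{1/2}\gv_{N,M}$ for continuous $f$. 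By the quadrature convergence \eqref{quad_convergence}, $\Gv\to[\langle\psi_k,\psi_j\rangle]_{j,k}$ and, using self-adjointness of $\mathcal{K}$ to identify $\overline{\langle\mathcal{K}\psi_j,\psi_k\rangle}=\langle\mathcal{K}\psi_k,\psi_j\rangle$, also $(\Av+\Av^*)/2\to[\langle\mathcal{K}\psi_k,\psi_j\rangle]_{j,k}$ entrywise as $M\to\infty$. Together with $\gv_{N,M}\to\gv_N$, this sends the inner limit to $\int_{\mathbb{R}}f\,\d\mu_{g,N}$, where $\mu_{g,N}$ is the spectral measure of the compression $\mathcal{P}_N\mathcal{K}\mathcal{P}_N^*$ with respect to $\mathcal{P}_Ng=\Psiv\gv_N$; this is exactly the measure denoted $\mu_{v,n}$ in \cref{thm_gen_convergence} with $v=g$ and $n=N$. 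The points to check are that the limiting Gram matrix is positive definite (the $\psi_j$ being linearly independent), so $\Gv^{\pm1/2}$ vary continuously near the limit, and that the isometry $\cv\mapsto\Psiv\cv$ identifies the $\Gv$-inner-product spectral measure on $\mathbb{C}^N$ with that of $\mathcal{P}_N\mathcal{K}\mathcal{P}_N^*$ on $V_N$.

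For the outer limit I would invoke \cref{thm_gen_convergence}. Three of its hypotheses hold immediately: $\mathcal{K}$ is self-adjoint, $\mathcal{P}_N^*\mathcal{P}_N\to I$ strongly, and $V_N\subset\mathcal{D}(\mathcal{K})$. The remaining ingredient is the convergence feeding \cref{strong_res_lemma}, which I would supply with core $S=\mathcal{D}(\mathcal{K})$: expanding the quantity that the proof of \cref{strong_res_lemma} must control gives
\[
  \bigl[(\mathcal{K}-zI)-\mathcal{P}_N^*\mathcal{P}_N(\mathcal{K}-zI)\mathcal{P}_N^*\mathcal{P}_N\bigr]u=\bigl(\mathcal{K}u-\mathcal{P}_N^*\mathcal{P}_N\mathcal{K}\mathcal{P}_N^*\mathcal{P}_Nu\bigr)-z\bigl(u-\mathcal{P}_N^*\mathcal{P}_Nu\bigr),
\]
whose second term vanishes by strong convergence of the projections and whose first term is exactly the standing hypothesis $\mathcal{P}_N\mathcal{K}\mathcal{P}_N^*\mathcal{P}_Nu\to\mathcal{K}u$. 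Strong resolvent convergence then follows, and, as in the proof of \cref{thm_gen_convergence}, the Stone--Weierstrass step gives vague convergence $\mu_{g,N}\to\mu_g$ while $\mu_{g,N}(\mathbb{R})=\|\mathcal{P}_Ng\|^2\to\|g\|^2=\mu_g(\mathbb{R})$ upgrades it to weak convergence. Chaining the two limits yields \eqref{eq_conv_of_HDMD}.

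The main obstacle is precisely this matching step. The theorem's hypothesis, $\mathcal{P}_N^*\mathcal{P}_N\mathcal{K}\mathcal{P}_N^*\mathcal{P}_Nu\to\mathcal{K}u$, is only the \emph{projected} form of the convergence in \cref{core_assump}: the latter demands the stronger $\mathcal{K}\mathcal{P}_N^*\mathcal{P}_Nu\to\mathcal{K}u$, equivalently the extra vanishing $(I-\mathcal{P}_N^*\mathcal{P}_N)\mathcal{K}\mathcal{P}_N^*\mathcal{P}_Nu\to0$, which need not hold for unbounded $\mathcal{K}$. Hence \cref{thm_gen_convergence} cannot be cited verbatim as a black box; one must instead reuse its proof, observing that \cref{strong_res_lemma} only ever invokes the projected form that the hypothesis provides. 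Everything else --- the finite-dimensional inner limit and the tightness argument for the outer limit --- is routine.
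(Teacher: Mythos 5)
Your proof is correct and follows the same route as the paper's: the inner limit $M\to\infty$ is treated as a finite-dimensional continuity statement (via \eqref{quad_convergence}) turning the Hermitian DMD spectral measure into that of the compression $\mathcal{P}_N\mathcal{K}\mathcal{P}_N^*$ with respect to $\mathcal{P}_Ng$, and the outer limit $N\to\infty$ is handled by \cref{thm_gen_convergence}. Your closing observation---that the theorem's hypothesis is only the \emph{projected} form of \cref{core_assump}, so one must rerun the argument of \cref{strong_res_lemma} (where the offending bracket is pre-multiplied by $\mathcal{P}_n^*T_n\mathcal{P}_n$, so only its projection onto $\mathcal{V}_n$ matters) rather than cite \cref{thm_gen_convergence} as a black box---is a correct and worthwhile refinement that the paper's two-sentence proof glosses over.
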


\begin{remark}
    The use of Hermitian DMD is crucial in \cref{thm:convergence_of_hermitianDMD} to restrict the least squares problem in \eqref{symmetric_easy}, and ensure that the spectral measures $\mu_{\gv_N}^{(M)}$ are supported on $\mathbb{R}$.
\end{remark}

\begin{proof}[Proof of \cref{thm:convergence_of_hermitianDMD}]
    The convergence of Hermitian DMD as $M\to \infty$ implies that
    $$
        \lim_{M\rightarrow\infty} \int_{\mathbb{R}} f(\lambda)\d \mu_{\gv_{N,M}}^{(M)}(\lambda)= \int_{\mathbb{R}} f(\lambda)\d \mu_{\gv_N}(\lambda),
    $$
    where $\mu_{\gv_N}$ is the spectral measure of $\mathcal{P}_N\mathcal{K}\mathcal{P}_N^*$ with respect to $\mathcal{P}_Ng$. The rest of the proof follows directly from \cref{thm_gen_convergence}.
\end{proof}

\section{Numerical example}
\label{sec:computational_example}

In this section, we evaluate the convergence of the Hermitian DMD algorithm on the two-dimensional Schr\"odinger equation given by
\[
    i\frac{\partial u}{\partial t} =\hat{H}(u)=-\frac{1}{2}\Delta u+V(x,y)u, \quad (x,y)\in \R^2,
\]
where $V(x,y)=(x^2+y^2)/2$ is an external harmonic potential, $u:\Omega\to \mathbb{C}$ is a normalized wave function, and $\hat{H}$ is the Hamiltonian operator of the system. Here, we select a computational domain $\Omega=(-5,5)^2$ to be large enough so that solutions vanish well before reaching the boundary. The snapshot data contain functions of the form $(u,i\partial_t u)$, which are related by the self-adjoint Hamiltonian operator $\hat{H}$. We use $N=400$ initial conditions consisting of Gaussian bumps (the dictionary) inside $\Omega$ as
\[u(x,y) = (1+i)e^{-3((x-x_i)^2+(y-y_j)^2)},\quad 1\leq i,j\leq 20,\]
where the Gaussian centers $(x_i,y_j)$ are uniformly distributed in the domain $[-4,4]^2$.\footnote{\rev{For suitable Gaussian bump functions, one can easily show that \cref{core_assump} is satisfied for our Schr\"odinger operator.}} We then evaluate the functions on the domain $\Omega$ at a uniform tensor-product grid of $M=300^2$ snapshot points, and discretize the integral in \eqref{eq:ContinuousLeastSquaresProblem} using a trapezoidal rule. Finally, we compute the $N\times N$ Koopman matrix $\Kv$ using the Hermitian DMD algorithm described in \cref{sec_self_adjoint}, and perform an eigenvalue decomposition of $\Kv$ to obtain the first eigenstates and corresponding eigenvalues to the Schr\"odinger operator (see~\cref{fig_schrodinger}).

\begin{figure}[htbp]
    \centering
    \begin{overpic}[width=\textwidth]{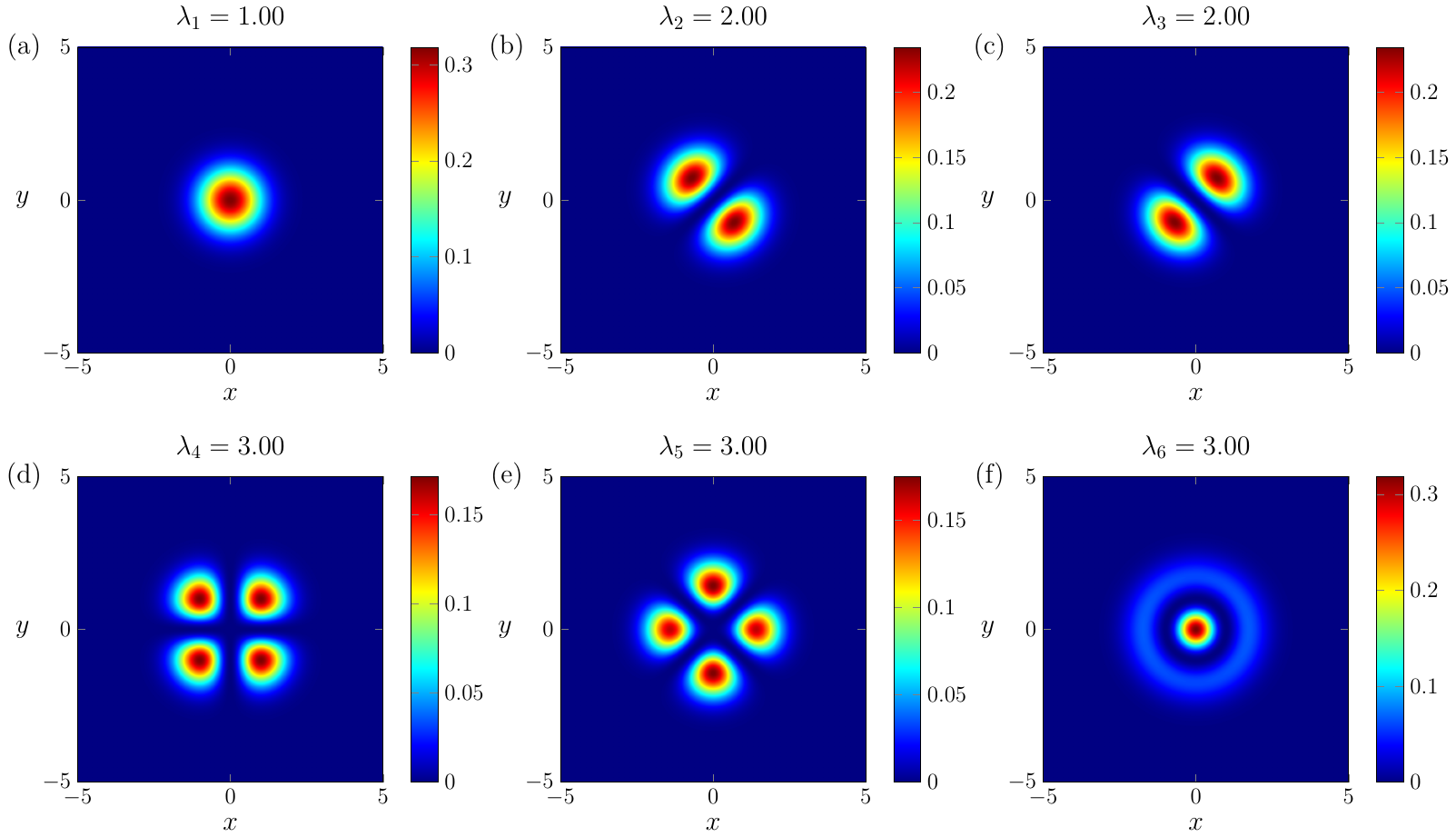}
    \end{overpic}
    \caption{The first $6$ energy eigenstates and eigenvalues of the Schr\"odinger operator discovered by Hermitian DMD.}
    \label{fig_schrodinger}
\end{figure}

The analytical expression for the eigenmodes of the Schr\"odinger equation are obtained using the standard ansatz $u(x,y,t)=\phi(x,y)e^{-iEt}$, where $\phi$ is the eigenfunction and $E$ is the corresponding eigenvalue. Following a separation of variables, one finds that the eigenmodes are proportional to~\cite[Eq.~(8)]{charalampidis2018computing}
\[\phi_{m,n}(x,y) = H_m(x)H_n(y)e^{-(x^2+y^2)/2},\quad (x,y)\in \R^2,\quad m,n\geq 0,\]
where $H_m$ is the Hermite polynomial of degree $m$, and the corresponding eigenvalues are given by $E_{m,n}=m+n+1$. We report in \cref{fig_eigenvalue_schrodinger} the first hundred eigenvalues obtained by Hermitian DMD along with the exact ones and observe a good agreement up to the $50$th eigenvalue.

\begin{figure}[htbp]
    \centering
    \begin{overpic}[width=0.6\textwidth]{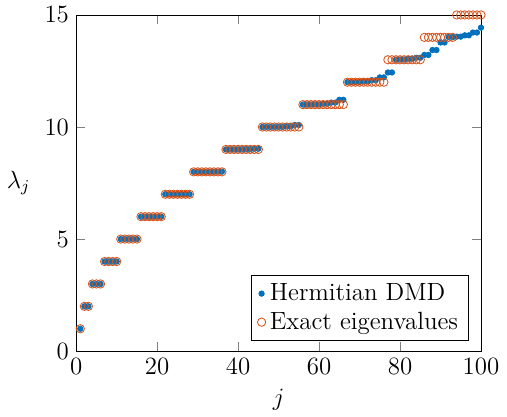}
    \end{overpic}
    \caption{The first $100$ eigenvalues of the Schr\"odinger operator discovered by Hermitian DMD along with the exact ones.}
    \label{fig_eigenvalue_schrodinger}
\end{figure}

\begin{figure}[htbp]
    \centering
    \begin{overpic}[width=\textwidth]{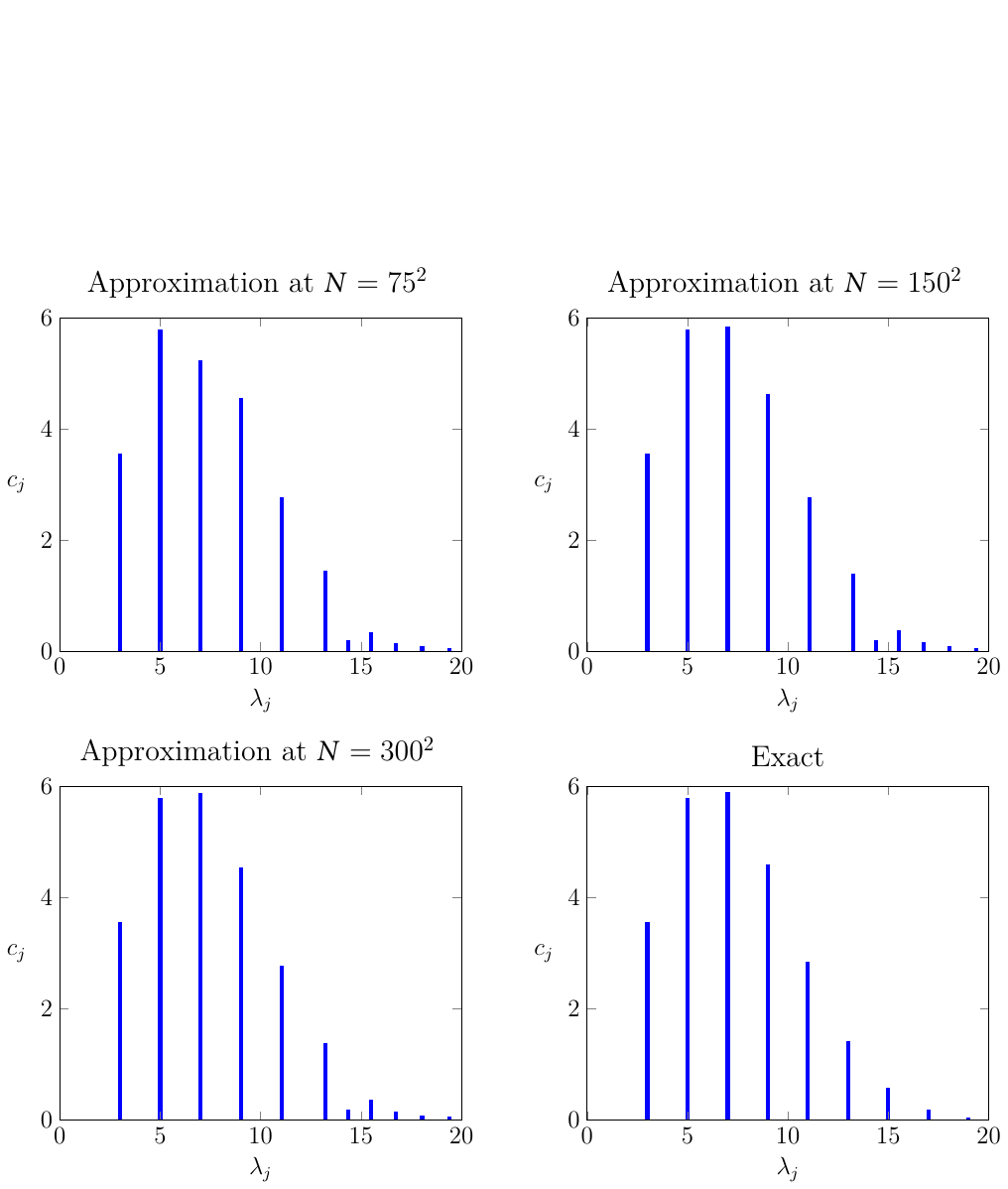}
    \end{overpic}
    \caption{Visualization of the approximate measures in \eqref{approx_meas}. For this example, weak convergence in \cref{thm:convergence_of_hermitianDMD} means that the positions and heights of the spikes converge. The heights of the spikes, $c_j$, can be thought of as an energy distribution akin to a Fourier transform (but now provided by the spectral theorem).}
    \label{fig_schrodinger_spectral}
\end{figure}

Then, we estimate $\mu_f$ for the function $f$ defined as
\[f(x,y) = \sin(\pi x/5)\sin(\pi y/5),\quad (x,y)\in [-5,5]^2.\]
The measure $\mu_{f,n}$ is given as a sum of Dirac measures
\begin{equation}
    \label{approx_meas}
    \mu_{f,N}=\sum_{j=1}^N c_j\delta_{\lambda_j},\quad c_j=|\vv_j^*\Gv\fv|^2.
\end{equation}
Here, $(\lambda_j,\vv_j)$ are the eigenpairs computed by Hermitian DMD and $f=\Psiv\fv$. For comparison, for each analytic eigenvalue, we take the mean of the cluster of the eigenvalues $\lambda_j$ that approximate it and then sum the weights $c_j$. The results are shown in \cref{fig_schrodinger_spectral,tab:my_label} and demonstrate the convergence in \cref{thm:convergence_of_hermitianDMD}.

\begin{table}[t!]
    \centering
    \caption{Convergence of $c_j$ ($\lambda_j$) in the first five spikes of \cref{fig_schrodinger_spectral} for different values of $N$.}
    \begin{tabular}{l|c|c|c|c|c}
        \hline
        $N = 75^2$  & 3.56 (3.00) & 5.79 (5.00) & 5.23 (7.00) & 4.55 (9.01) & 2.77 (11.06) \\
        $N = 150^2$ & 3.56 (3.00) & 5.79 (5.00) & 5.85 (7.00) & 4.62 (9.01) & 2.77 (11.06) \\
        $N = 300^2$ & 3.56 (3.00) & 5.79 (5.00) & 5.90 (7.00) & 4.59 (9.01) & 2.86 (11.06) \\
        \hline\hline
        Exact       & 3.56 (3.00) & 5.79 (5.00) & 5.90 (7.00) & 4.59 (9.00) & 2.86 (11.00) \\
        \hline
    \end{tabular}

    \label{tab:my_label}
\end{table}

\rev{
    \section*{Data availability}
    The code to reproduce the numerical experiments is publicly available on GitHub at \url{https://github.com/NBoulle/ConvergenceHermitianDMD}.
}

\section*{Acknowledgments}
NB was supported by the SciAI Center funded by the Office of Naval Research (ONR), under Grant Number N00014-23-1-2729.

\bibliographystyle{elsarticle-num}
\bibliography{biblio}

\end{document}